\newtheorem{theorem}{Theorem}
\theoremstyle{plain}
\newtheorem{corollary}{Corollary}
\newtheorem{definition}{Definition}
\newtheorem{proposition}{Proposition}
\numberwithin{equation}{section}
\begin{document}
\title[Integral inequalities for $P$-functions]{Some new general integral
inequalities for $P$-functions}
\author{\.{I}mdat \.{I}\c{s}can$^{\blacktriangledown }$}
\address{$^{\blacktriangledown }$Department of Mathematics, Faculty of Arts
and Sciences, Giresun University, 28100, Giresun, Turkey.}
\email{imdat.iscan@giresun.edu.tr, imdati@yahoo.com}
\author{Erhan SET$^{\clubsuit }$}
\address{$^{\clubsuit }$Department of Mathematics, Faculty of Arts and
Sciences, Ordu University, 52200, Ordu, Turkey}
\email{erhanset@yahoo.com}
\author{M. Emin \"{O}zdemir$^{\blacksquare }$}
\address{$^{\blacksquare }$Atat\"{u}rk University, K.K. Education Faculty,
Department of Mathematics, 25240, Campus, Erzurum, Turkey}
\email{emos@atauni.edu.tr}
\date{June 10, 2012}
\subjclass[2000]{26A51, 26D15}
\keywords{convex function, $P$-function, Simpson's inequality,
Hermite-Hadamard's inequality.}

\begin{abstract}
In this paper, we derive new estimates for the remainder term of the
midpoint, trapezoid, and Simpson formulae for functions whose derivatives in
absolute value at certain power are $P$-functions. Some applications to
special means of real numbers are also given.
\end{abstract}

\maketitle

\section{Introduction}

Let $f:I\subseteq \mathbb{R\rightarrow R}$ be a convex function defined on
the interval $I$ of real numbers and $a,b\in I$ with $a<b$. The following
inequality holds:%
\begin{equation}
f\left( \frac{a+b}{2}\right) \leq \frac{1}{b-a}\dint\limits_{a}^{b}f(x)dx%
\leq \frac{f(a)+f(b)}{2}\text{.}  \label{1-1}
\end{equation}
This double inequality is well known as Hermite-Hadamard integral inequality
for convex functions in the literature .

In \cite{DPP95} Dragomir et al. defined the concept of $P$-function as the
following:

\begin{definition}
We say that $f:I\rightarrow 
\mathbb{R}
$ is a $P$-function, or that $f$ belongs to the class $P(I)$, if $f$ is a
non-negative function and for all $x,y\in I$, $\alpha \in \lbrack 0,1]$, we
have%
\begin{equation*}
f\left( \alpha x+(1-\alpha )y\right) \leq f(x)+f(y).
\end{equation*}%
$P(I)$ contain all nonnegative monotone convex and quasi convex functions.
\end{definition}

In \cite{DPP95}, Dragomir et al., proved following inequalities of
Hadamard's type for $P$-function

\begin{theorem}
Let $f\in P(I),a,b\in I$ with $a<b$ and $f\in L\left[ a,b\right] .$ Then the
following inequality holds%
\begin{equation}
f\left( \frac{a+b}{2}\right) \leq \frac{2}{b-a}\dint\limits_{a}^{b}f(x)dx%
\leq 2\left[ f(a)+f(b)\right] \text{.}  \label{1-2}
\end{equation}
\end{theorem}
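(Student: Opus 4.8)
The plan is to establish the two inequalities separately, in each case reducing the integral to a $t$-parametrized form over $[0,1]$ via the substitution $x=ta+(1-t)b$ and then invoking the defining property of a $P$-function. The only structural idea needed beyond the definition is a symmetric substitution for the left-hand inequality that forces the midpoint $\frac{a+b}{2}$ to appear.

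For the left inequality $f\!\left(\frac{a+b}{2}\right)\leq \frac{2}{b-a}\int_a^b f(x)\,dx$, I would exploit the identity
\begin{equation*}
\frac{a+b}{2}=\frac{1}{2}\bigl(ta+(1-t)b\bigr)+\frac{1}{2}\bigl((1-t)a+tb\bigr)
\end{equation*}
valid for every $t\in[0,1]$. Applying the $P$-function inequality with $\alpha=\frac12$ to the two points $x=ta+(1-t)b$ and $y=(1-t)a+tb$ gives
\begin{equation*}
f\!\left(\frac{a+b}{2}\right)\leq f\bigl(ta+(1-t)b\bigr)+f\bigl((1-t)a+tb\bigr).
\end{equation*}
Integrating both sides in $t$ over $[0,1]$ and noting that, after the change of variable $x=ta+(1-t)b$ (respectively $x=(1-t)a+tb$), each of the two resulting integrals equals $\frac{1}{b-a}\int_a^b f(x)\,dx$, I obtain the claimed lower bound.

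For the right inequality $\frac{2}{b-a}\int_a^b f(x)\,dx\leq 2\bigl[f(a)+f(b)\bigr]$, I would start directly from the change of variable $x=ta+(1-t)b$, which yields $\frac{1}{b-a}\int_a^b f(x)\,dx=\int_0^1 f\bigl(ta+(1-t)b\bigr)\,dt$. Applying the $P$-function definition with $x=a$, $y=b$, and $\alpha=t$ gives the pointwise bound $f\bigl(ta+(1-t)b\bigr)\leq f(a)+f(b)$; integrating over $t\in[0,1]$ and multiplying by $2$ produces the upper bound.

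I do not anticipate a genuine obstacle here: the argument is a direct two-sided application of the definition, and integrability of $f$ on $[a,b]$ (assumed in the hypothesis) guarantees all the integrals exist. The one point requiring care is the first step, where the correct symmetric pairing of the substitution must be chosen so that $\alpha=\frac12$ collapses to the midpoint; the factor $2$ in both bounds, as opposed to the classical Hermite--Hadamard factor $1$, is exactly the price paid for replacing midpoint convexity by the weaker $P$-function inequality.
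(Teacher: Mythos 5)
Your proof is correct: the symmetric pairing $\frac{a+b}{2}=\frac{1}{2}\bigl(ta+(1-t)b\bigr)+\frac{1}{2}\bigl((1-t)a+tb\bigr)$ with $\alpha=\frac12$ gives the left inequality after integration in $t$, the pointwise bound $f\bigl(ta+(1-t)b\bigr)\leq f(a)+f(b)$ gives the right one, and the assumption $f\in L[a,b]$ justifies every integration step. The paper itself states this theorem without proof, quoting it from the cited work of Dragomir, Pe\v{c}ari\'{c} and Persson, and your argument is precisely the standard one used there, so the two approaches coincide.
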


The following inequality is well known in the literature as Simpson's
inequality .

Let $f:\left[ a,b\right] \mathbb{\rightarrow R}$ be a four times
continuously differentiable mapping on $\left( a,b\right) $ and $\left\Vert
f^{(4)}\right\Vert _{\infty }=\underset{x\in \left( a,b\right) }{\sup }%
\left\vert f^{(4)}(x)\right\vert <\infty .$ Then the following inequality
holds:%
\begin{equation*}
\left\vert \frac{1}{3}\left[ \frac{f(a)+f(b)}{2}+2f\left( \frac{a+b}{2}%
\right) \right] -\frac{1}{b-a}\dint\limits_{a}^{b}f(x)dx\right\vert \leq 
\frac{1}{2880}\left\Vert f^{(4)}\right\Vert _{\infty }\left( b-a\right) ^{4}.
\end{equation*}%
\ \qquad In recent years many authors have studied error estimations for
Simpson's inequality and Hermite-Hadamard inequalitiy; for refinements,
counterparts, generalizations, see (\cite{BB11}-\cite{TYE13}).

In \cite{I12}, Iscan obtained\ a new generalization of some integral
inequalities for differentiable convex mapping which are connected Simpson
and Hadamard type inequalities by using the following lemma.

\label{2.1}Let $f:I\subseteq \mathbb{R\rightarrow R}$ be a differentiable
mapping on $I^{\circ }$ such that $f^{\prime }\in L[a,b]$, where $a,b\in I$
with $a<b$ and $\alpha ,\lambda \in \left[ 0,1\right] $. Then the following
equality holds:%
\begin{eqnarray*}
&&\lambda \left( \alpha f(a)+\left( 1-\alpha \right) f(b)\right) +\left(
1-\lambda \right) f(\alpha a+\left( 1-\alpha \right) b)-\frac{1}{b-a}%
\dint\limits_{a}^{b}f(x)dx \\
&=&\left( b-a\right) \left[ \dint\limits_{0}^{1-\alpha }\left( t-\alpha
\lambda \right) f^{\prime }\left( tb+(1-t)a\right) dt\right. \\
&&\left. +\dint\limits_{1-\alpha }^{1}\left( t-1+\lambda \left( 1-\alpha
\right) \right) f^{\prime }\left( tb+(1-t)a\right) dt\right] .
\end{eqnarray*}%
The aim of this paper is to establish some new general integral inequalities
for functions whose derivatives in absolute value at certain power are $P$%
-functions. Some applications of these results to special means is to give
as well.

\section{Main results}

Let $f:I\subseteq 
\mathbb{R}
\rightarrow 
\mathbb{R}
$ be a differentiable function on $I^{\circ }$, the interior of $I$.
Throughout this section we will take%
\begin{eqnarray*}
&&I_{f}\left( \lambda ,\alpha ,a,b\right) \\
&=&\lambda \left( \alpha f(a)+\left( 1-\alpha \right) f(b)\right) +\left(
1-\lambda \right) f(\alpha a+\left( 1-\alpha \right) b)-\frac{1}{b-a}%
\dint\limits_{a}^{b}f(x)dx
\end{eqnarray*}%
where $a,b\in I^{\circ }$ with $a<b$ and $\alpha ,\lambda \in \left[ 0,1%
\right] $.

\begin{theorem}
\label{2.2}Let $f:I\subseteq \mathbb{R\rightarrow R}$ be a differentiable
mapping on $I^{\circ }$ such that $f^{\prime }\in L[a,b]$, where $a,b\in
I^{\circ }$ with $a<b$ and $\alpha ,\lambda \in \left[ 0,1\right] $. If $%
\left\vert f^{\prime }\right\vert ^{q}$ is $P$-function on $[a,b]$, $q\geq
1, $ then the following inequality holds:%
\begin{eqnarray}
&&\left\vert I_{f}\left( \lambda ,\alpha ,a,b\right) \right\vert \leq \left(
b-a\right) \left( \left\vert f^{\prime }(b)\right\vert ^{q}+\left\vert
f^{\prime }(a)\right\vert ^{q}\right) ^{\frac{1}{q}}  \label{2-2} \\
&&\times \left\{ 
\begin{array}{cc}
\gamma _{2}(\alpha ,\lambda )+\gamma _{2}(1-\alpha ,\lambda ) & \alpha
\lambda \leq 1-\alpha \leq 1-\lambda \left( 1-\alpha \right) \\ 
\gamma _{2}(\alpha ,\lambda )+\gamma _{1}(1-\alpha ,\lambda ) & \alpha
\lambda \leq 1-\lambda \left( 1-\alpha \right) \leq 1-\alpha \\ 
\gamma _{1}(\alpha ,\lambda )+\gamma _{2}(1-\alpha ,\lambda ) & 1-\alpha
\leq \alpha \lambda \leq 1-\lambda \left( 1-\alpha \right)%
\end{array}%
\right. ,  \notag
\end{eqnarray}%
where 
\begin{eqnarray}
\gamma _{1}(\alpha ,\lambda ) &=&\left( 1-\alpha \right) \left[ \alpha
\lambda -\frac{\left( 1-\alpha \right) }{2}\right] ,  \label{2-2a} \\
\ \gamma _{2}(\alpha ,\lambda ) &=&\left( \alpha \lambda \right) ^{2}-\gamma
_{1}(\alpha ,\lambda )\ .  \notag
\end{eqnarray}
\end{theorem}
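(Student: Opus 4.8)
The plan is to start from the integral identity of Lemma~\ref{2.1} (with $I_{f}$ as defined in the section), take absolute values, and bound the two resulting integrals using the power-mean inequality together with the $P$-function hypothesis. Writing $u(t)=f^{\prime }(tb+(1-t)a)$, the identity gives
\[
\left\vert I_{f}(\lambda ,\alpha ,a,b)\right\vert \le (b-a)\left[ \int_{0}^{1-\alpha }\left\vert t-\alpha \lambda \right\vert |u(t)|\,dt+\int_{1-\alpha }^{1}\left\vert t-1+\lambda (1-\alpha )\right\vert |u(t)|\,dt\right] .
\]

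First, to each of the two integrals I would apply the power-mean inequality in the form $\int w|u|\le (\int w)^{1-1/q}(\int w|u|^{q})^{1/q}$, valid for $q\ge 1$, with weight $w(t)=|t-\alpha \lambda |$ (respectively $|t-1+\lambda (1-\alpha )|$). Next I invoke the hypothesis that $|f^{\prime }|^{q}\in P[a,b]$: since $tb+(1-t)a$ is a convex combination of $a$ and $b$ with parameter $t\in \lbrack 0,1]$, the defining inequality of a $P$-function yields $|u(t)|^{q}\le |f^{\prime }(b)|^{q}+|f^{\prime }(a)|^{q}$ for every $t$. This lets me pull the constant $|f^{\prime }(b)|^{q}+|f^{\prime }(a)|^{q}$ out of the factor $(\int w|u|^{q})^{1/q}$, whereupon the two powers of $\int w$ recombine and each mixed integral is bounded by $(|f^{\prime }(b)|^{q}+|f^{\prime }(a)|^{q})^{1/q}\int w$. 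Thus everything reduces to evaluating the two pure weight integrals $\int_{0}^{1-\alpha }|t-\alpha \lambda |\,dt$ and $\int_{1-\alpha }^{1}|t-1+\lambda (1-\alpha )|\,dt$.

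The remaining work — and the only real bookkeeping — is the exact evaluation of these two weight integrals, whose values depend on whether the ``kink'' of the integrand falls inside or outside the interval of integration. For the first integral the kink is at $t=\alpha \lambda $: if $\alpha \lambda \le 1-\alpha $ the integrand changes sign inside $[0,1-\alpha ]$ and a direct computation gives the value $\gamma _{2}(\alpha ,\lambda )$ of \eqref{2-2a}, whereas if $\alpha \lambda \ge 1-\alpha $ the integrand has constant sign and the value is $\gamma _{1}(\alpha ,\lambda )$. For the second integral I would substitute $t\mapsto 1-t$, which turns it into $\int_{0}^{\alpha }|t-\lambda (1-\alpha )|\,dt$; this is precisely the first integral with $\alpha $ replaced by $1-\alpha $, so it equals $\gamma _{2}(1-\alpha ,\lambda )$ when $\lambda (1-\alpha )\le \alpha $ (i.e.\ $1-\alpha \le 1-\lambda (1-\alpha )$) and $\gamma _{1}(1-\alpha ,\lambda )$ otherwise.

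Finally I would assemble the three stated cases by combining the sign alternatives for the two integrals and checking the compatibility of the inequalities. In the regime $1-\lambda (1-\alpha )\le 1-\alpha $ one has $\lambda (1-\alpha )\ge \alpha $, which forces $\alpha \le \tfrac{1}{2}$ and hence $\alpha \lambda \le \alpha \le 1-\alpha $ automatically, so the first integral must be $\gamma _{2}(\alpha ,\lambda )$; this produces case two. Symmetrically, $1-\alpha \le \alpha \lambda $ forces $\alpha \ge \tfrac{1}{2}$ and hence $\lambda (1-\alpha )\le 1-\alpha \le \alpha $, making the second integral $\gamma _{2}(1-\alpha ,\lambda )$, which is case three. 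The inequality $\alpha \lambda \le 1-\lambda (1-\alpha )$ in the hypotheses is just $\lambda \le 1$ and is never binding. I expect the main obstacle to be purely organizational rather than analytic: confirming that these three case-conditions are consistent and cover the whole admissible square $\alpha ,\lambda \in \lbrack 0,1]$, the only omitted combination (both integrals of type $\gamma _{1}$) occurring solely at the boundary point $\alpha =\tfrac{1}{2},\ \lambda =1$, where $\gamma _{1}$ and $\gamma _{2}$ coincide.
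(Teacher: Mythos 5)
Your proposal is correct and takes essentially the same route as the paper's own proof: Lemma \ref{2.1}, the power-mean inequality applied to each integral with weights $\left\vert t-\alpha \lambda \right\vert$ and $\left\vert t-1+\lambda \left( 1-\alpha \right) \right\vert$, the pointwise $P$-function bound $\left\vert f^{\prime }\left( tb+(1-t)a\right) \right\vert ^{q}\leq \left\vert f^{\prime }(a)\right\vert ^{q}+\left\vert f^{\prime }(b)\right\vert ^{q}$ to extract the constant factor, and then the case-by-case evaluation of the two weight integrals exactly as in \eqref{2-4} and \eqref{2-5}. Your closing verification that the three case-conditions are consistent and cover the whole square $\alpha ,\lambda \in \left[ 0,1\right]$ (with the fourth combination degenerate at $\alpha =\tfrac{1}{2}$, $\lambda =1$) is a detail the paper omits, but the argument is the same.
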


\begin{proof}
Suppose that $q\geq 1.$Since $\left\vert f^{\prime }\right\vert ^{q}$ is $P$%
-function on $[a,b]$, from Lemma \ref{2.1} and using the well known power
mean inequality, we have%
\begin{eqnarray*}
&&\left\vert I_{f}\left( \lambda ,\alpha ,a,b\right) \right\vert \\
&\leq &\left( b-a\right) \left[ \dint\limits_{0}^{1-\alpha }\left\vert
t-\alpha \lambda \right\vert \left\vert f^{\prime }\left( tb+(1-t)a\right)
\right\vert dt+\dint\limits_{1-\alpha }^{1}\left\vert t-1+\lambda \left(
1-\alpha \right) \right\vert \left\vert f^{\prime }\left( tb+(1-t)a\right)
\right\vert dt\right]
\end{eqnarray*}%
\begin{eqnarray*}
&\leq &\left( b-a\right) \left\{ \left( \dint\limits_{0}^{1-\alpha
}\left\vert t-\alpha \lambda \right\vert dt\right) ^{1-\frac{1}{q}}\left(
\dint\limits_{0}^{1-\alpha }\left\vert t-\alpha \lambda \right\vert
\left\vert f^{\prime }\left( tb+(1-t)a\right) \right\vert ^{q}dt\right) ^{%
\frac{1}{q}}\right. \\
&&\left. +\left( \dint\limits_{1-\alpha }^{1}\left\vert t-1+\lambda \left(
1-\alpha \right) \right\vert dt\right) ^{1-\frac{1}{q}}\left(
\dint\limits_{1-\alpha }^{1}\left\vert t-1+\lambda \left( 1-\alpha \right)
\right\vert \left\vert f^{\prime }\left( tb+(1-t)a\right) \right\vert
^{q}dt\right) ^{\frac{1}{q}}\right\}
\end{eqnarray*}%
\begin{equation}
\leq \left( b-a\right) \left( \left\vert f^{\prime }(b)\right\vert
^{q}+\left\vert f^{\prime }(a)\right\vert ^{q}\right) ^{\frac{1}{q}}\left\{
\dint\limits_{0}^{1-\alpha }\left\vert t-\alpha \lambda \right\vert
dt+\dint\limits_{1-\alpha }^{1}\left\vert t-1+\lambda \left( 1-\alpha
\right) \right\vert dt\right\}  \label{2-3}
\end{equation}%
Additionally, by simple computation%
\begin{equation}
\dint\limits_{0}^{1-\alpha }\left\vert t-\alpha \lambda \right\vert
dt=\left\{ 
\begin{array}{cc}
\gamma _{2}(\alpha ,\lambda ), & \alpha \lambda \leq 1-\alpha \\ 
\gamma _{1}(\alpha ,\lambda ), & \alpha \lambda \geq 1-\alpha%
\end{array}%
\right. ,  \label{2-4}
\end{equation}%
\begin{equation*}
\gamma _{1}(\alpha ,\lambda )=\left( 1-\alpha \right) \left[ \alpha \lambda -%
\frac{\left( 1-\alpha \right) }{2}\right] ,\ \gamma _{2}(\alpha ,\lambda
)=\left( \alpha \lambda \right) ^{2}-\gamma _{1}(\alpha ,\lambda )\ ,
\end{equation*}%
\begin{equation}
\dint\limits_{1-\alpha }^{1}\left\vert t-1+\lambda \left( 1-\alpha \right)
\right\vert dt=\dint\limits_{0}^{\alpha }\left\vert t-\left( 1-\alpha
\right) \lambda \right\vert dt  \label{2-5}
\end{equation}%
\begin{equation*}
=\left\{ 
\begin{array}{cc}
\gamma _{1}(1-\alpha ,\lambda ), & 1-\lambda \left( 1-\alpha \right) \leq
1-\alpha \\ 
\gamma _{2}(1-\alpha ,\lambda ), & 1-\lambda \left( 1-\alpha \right) \geq
1-\alpha%
\end{array}%
\right. ,
\end{equation*}%
Thus, using (\ref{2-4}) and (\ref{2-5}) in (\ref{2-3}), we obtain the
inequality (\ref{2-2}). This completes the proof.
\end{proof}

\begin{corollary}
Under the assumptions of Theorem \ref{2.2} with $q=1,$ we have 
\begin{eqnarray*}
&&\left\vert I_{f}\left( \lambda ,\alpha ,a,b\right) \right\vert \leq \left(
b-a\right) \left( \left\vert f^{\prime }(b)\right\vert +\left\vert f^{\prime
}(a)\right\vert \right) \\
&&\times \left\{ 
\begin{array}{cc}
\gamma _{2}(\alpha ,\lambda )+\gamma _{2}(1-\alpha ,\lambda ) & \alpha
\lambda \leq 1-\alpha \leq 1-\lambda \left( 1-\alpha \right) \\ 
\gamma _{2}(\alpha ,\lambda )+\gamma _{1}(1-\alpha ,\lambda ) & \alpha
\lambda \leq 1-\lambda \left( 1-\alpha \right) \leq 1-\alpha \\ 
\gamma _{1}(\alpha ,\lambda )+\gamma _{2}(1-\alpha ,\lambda ) & 1-\alpha
\leq \alpha \lambda \leq 1-\lambda \left( 1-\alpha \right)%
\end{array}%
\right. ,
\end{eqnarray*}
\end{corollary}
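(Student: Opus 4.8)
The plan is to obtain the Corollary as the special case $q=1$ of Theorem \ref{2.2}. The parameter $q$ enters the conclusion (\ref{2-2}) only through the factor $\left( \left\vert f^{\prime }(b)\right\vert ^{q}+\left\vert f^{\prime }(a)\right\vert ^{q}\right) ^{1/q}$ and through the exponent $1-\frac{1}{q}$ appearing in the power-mean step of the proof; the piecewise bracket assembled from $\gamma _{1}$ and $\gamma _{2}$ does not depend on $q$ at all. Setting $q=1$ collapses the factor to $\left\vert f^{\prime }(b)\right\vert +\left\vert f^{\prime }(a)\right\vert $ and leaves the three-case bracket untouched, so the stated inequality follows at once.

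For completeness I would also record that a self-contained derivation is available that does not route through Theorem \ref{2.2}, and is in fact marginally shorter because at $q=1$ the power-mean inequality is vacuous (its exponent $1-\frac{1}{q}$ vanishes). First I would apply Lemma \ref{2.1} to express $I_{f}\left( \lambda ,\alpha ,a,b\right) $ as $\left( b-a\right) $ times the sum of the two integrals over $\left[ 0,1-\alpha \right] $ and $\left[ 1-\alpha ,1\right] $, and then pass the modulus inside by the triangle inequality. Next, using that $\left\vert f^{\prime }\right\vert $ is a $P$-function on $[a,b]$, I would bound $\left\vert f^{\prime }\left( tb+(1-t)a\right) \right\vert \leq \left\vert f^{\prime }(b)\right\vert +\left\vert f^{\prime }(a)\right\vert $ uniformly for $t\in [0,1]$ (the $P$-function estimate with $x=b$, $y=a$, and weight $t$), and factor this constant out of both integrals.

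It then remains only to evaluate the two elementary integrals $\int_{0}^{1-\alpha }\left\vert t-\alpha \lambda \right\vert dt$ and $\int_{1-\alpha }^{1}\left\vert t-1+\lambda \left( 1-\alpha \right) \right\vert dt$, which are precisely the computations (\ref{2-4}) and (\ref{2-5}) already recorded in the proof of Theorem \ref{2.2}. Distinguishing the three sign configurations of $\alpha \lambda $, $1-\alpha $, and $1-\lambda \left( 1-\alpha \right) $ reproduces the three-way case split and assembles the factors $\gamma _{2}(\alpha ,\lambda )+\gamma _{2}(1-\alpha ,\lambda )$, $\gamma _{2}(\alpha ,\lambda )+\gamma _{1}(1-\alpha ,\lambda )$, and $\gamma _{1}(\alpha ,\lambda )+\gamma _{2}(1-\alpha ,\lambda )$, which is exactly (\ref{2-2}) with $q=1$.

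The one point demanding care --- and the only place the argument could misfire --- is the bookkeeping of these three inequality regimes and the correct pairing of each regime with its two $\gamma $ terms; the integral evaluations themselves are routine. Since this case analysis is identical to the one already carried out for Theorem \ref{2.2}, I anticipate no genuine obstacle.
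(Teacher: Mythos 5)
Your proposal is correct and takes the same route as the paper: the corollary is just Theorem \ref{2.2} specialized at $q=1$, where the only $q$-dependence in (\ref{2-2}) sits in the factor $\left( \left\vert f^{\prime }(b)\right\vert ^{q}+\left\vert f^{\prime }(a)\right\vert ^{q}\right) ^{1/q}$, which collapses to $\left\vert f^{\prime }(b)\right\vert +\left\vert f^{\prime }(a)\right\vert $, and the paper accordingly states it without separate proof. Your supplementary self-contained derivation (triangle inequality, the $P$-function bound $\left\vert f^{\prime }\left( tb+(1-t)a\right) \right\vert \leq \left\vert f^{\prime }(b)\right\vert +\left\vert f^{\prime }(a)\right\vert $, then the evaluations (\ref{2-4}) and (\ref{2-5})) is also sound, but it is essentially the proof of Theorem \ref{2.2} with the power-mean step made vacuous, so it adds nothing beyond the specialization.
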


\begin{corollary}
\label{2.2a}In Theorem \ref{2.2} , if we take $\alpha =\frac{1}{2}$ and $%
\lambda =\frac{1}{3},$ then we have the following Simpson type inequality%
\begin{equation*}
\left\vert \frac{1}{6}\left[ f(a)+4f\left( \frac{a+b}{2}\right) +f(b)\right]
-\frac{1}{b-a}\dint\limits_{a}^{b}f(x)dx\right\vert \leq \frac{5\left(
b-a\right) }{36}\left( \left\vert f^{\prime }(b)\right\vert ^{q}+\left\vert
f^{\prime }(a)\right\vert ^{q}\right) ^{\frac{1}{q}}
\end{equation*}
\end{corollary}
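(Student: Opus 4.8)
The plan is to obtain this Simpson-type estimate as a direct specialization of Theorem \ref{2.2}: I would set $\alpha=\frac{1}{2}$ and $\lambda=\frac{1}{3}$ in the inequality \eqref{2-2} and then check that both sides collapse to the asserted form. The first task is to confirm that the left-hand side $I_{f}\left(\frac{1}{3},\frac{1}{2},a,b\right)$ reproduces the Simpson expression. Since $\alpha f(a)+(1-\alpha)f(b)=\frac{1}{2}\left(f(a)+f(b)\right)$ and $\alpha a+(1-\alpha)b=\frac{a+b}{2}$, the leading terms become $\frac{1}{3}\cdot\frac{1}{2}\left(f(a)+f(b)\right)+\frac{2}{3}f\left(\frac{a+b}{2}\right)$, which rearranges to $\frac{1}{6}\left[f(a)+4f\left(\frac{a+b}{2}\right)+f(b)\right]$. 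Thus $I_{f}$ is exactly the quantity inside the absolute value on the left of the claimed inequality.

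Next I would determine which of the three cases in \eqref{2-2} is active for these parameters. Computing $\alpha\lambda=\frac{1}{6}$, $1-\alpha=\frac{1}{2}$, and $1-\lambda(1-\alpha)=\frac{5}{6}$, one sees that $\alpha\lambda\leq 1-\alpha\leq 1-\lambda(1-\alpha)$ holds, so we fall into the first case and the relevant bracketed factor is $\gamma_{2}(\alpha,\lambda)+\gamma_{2}(1-\alpha,\lambda)$.

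Finally I would evaluate the constants from \eqref{2-2a}. Here $1-\alpha=\alpha=\frac{1}{2}$, so the two contributions coincide and it suffices to compute $\gamma_{2}\left(\frac{1}{2},\frac{1}{3}\right)$ once and double it; using $\gamma_{1}\left(\frac{1}{2},\frac{1}{3}\right)=\frac{1}{2}\left(\frac{1}{6}-\frac{1}{4}\right)=-\frac{1}{24}$ and the relation $\gamma_{2}=(\alpha\lambda)^{2}-\gamma_{1}$ gives $\gamma_{2}\left(\frac{1}{2},\frac{1}{3}\right)=\frac{1}{36}+\frac{1}{24}=\frac{5}{72}$, whence the bracketed factor is $\frac{5}{72}+\frac{5}{72}=\frac{5}{36}$. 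Substituting this into \eqref{2-2} yields the stated bound $\frac{5(b-a)}{36}\left(\left\vert f^{\prime}(b)\right\vert^{q}+\left\vert f^{\prime}(a)\right\vert^{q}\right)^{1/q}$. There is no genuine obstacle in this argument, since it is a pure specialization of Theorem \ref{2.2}; the only points demanding care are correctly identifying the case from the ordering of $\alpha\lambda$, $1-\alpha$, and $1-\lambda(1-\alpha)$, and performing the $\gamma$-value arithmetic without slips.
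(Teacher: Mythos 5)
Your proposal is correct and follows exactly the route the paper intends: the corollary is a direct specialization of Theorem \ref{2.2} with $\alpha=\frac{1}{2}$, $\lambda=\frac{1}{3}$, and your case identification ($\alpha\lambda=\frac{1}{6}\leq 1-\alpha=\frac{1}{2}\leq 1-\lambda(1-\alpha)=\frac{5}{6}$, so the first case with $\gamma_{2}(\alpha,\lambda)+\gamma_{2}(1-\alpha,\lambda)$ applies) together with the arithmetic $\gamma_{2}\left(\frac{1}{2},\frac{1}{3}\right)=\frac{1}{36}+\frac{1}{24}=\frac{5}{72}$ giving the factor $\frac{5}{36}$ is exactly right.
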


\begin{corollary}
\label{2.2b}In Theorem \ref{2.2} , if we take $\alpha =\frac{1}{2}$ and $%
\lambda =0,$ then we have following midpoint inequality%
\begin{equation*}
\left\vert f\left( \frac{a+b}{2}\right) -\frac{1}{b-a}\dint%
\limits_{a}^{b}f(x)dx\right\vert \leq \frac{b-a}{4}\left( \left\vert
f^{\prime }(b)\right\vert ^{q}+\left\vert f^{\prime }(a)\right\vert
^{q}\right) ^{\frac{1}{q}}
\end{equation*}
\end{corollary}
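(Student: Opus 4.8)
The plan is to obtain this midpoint inequality directly as the specialization of Theorem \ref{2.2} to the parameter values $\alpha =\frac{1}{2}$ and $\lambda =0$, so that essentially no new analytic work is required. First I would substitute these values into the defining expression for $I_{f}$. Because $\lambda $ multiplies the endpoint combination $\alpha f(a)+(1-\alpha )f(b)$, taking $\lambda =0$ annihilates that term and leaves $(1-\lambda )f(\alpha a+(1-\alpha )b)=f\left( \frac{a+b}{2}\right) $. Hence $I_{f}\left( 0,\frac{1}{2},a,b\right) =f\left( \frac{a+b}{2}\right) -\frac{1}{b-a}\int_{a}^{b}f(x)\,dx$, which is exactly the quantity appearing on the left-hand side of the claimed inequality.

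Next I would determine which of the three branches of the piecewise bound in (\ref{2-2}) is active. With $\alpha =\frac{1}{2}$ and $\lambda =0$ the three comparison quantities become $\alpha \lambda =0$, $1-\alpha =\frac{1}{2}$, and $1-\lambda (1-\alpha )=1$. Since $0\leq \frac{1}{2}\leq 1$, the first-case hypothesis $\alpha \lambda \leq 1-\alpha \leq 1-\lambda (1-\alpha )$ is satisfied, so the relevant factor multiplying $(b-a)\left( \left\vert f^{\prime }(b)\right\vert ^{q}+\left\vert f^{\prime }(a)\right\vert ^{q}\right) ^{\frac{1}{q}}$ is $\gamma _{2}(\alpha ,\lambda )+\gamma _{2}(1-\alpha ,\lambda )$.

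Finally I would evaluate these two terms from the formulas (\ref{2-2a}). Since $1-\alpha =\alpha =\frac{1}{2}$ for these parameters, both summands coincide with $\gamma _{2}\left( \frac{1}{2},0\right) $. Computing $\gamma _{1}\left( \frac{1}{2},0\right) =\frac{1}{2}\left( 0-\frac{1}{4}\right) =-\frac{1}{8}$ gives $\gamma _{2}\left( \frac{1}{2},0\right) =0-\left( -\frac{1}{8}\right) =\frac{1}{8}$, so the bracketed factor equals $\frac{1}{8}+\frac{1}{8}=\frac{1}{4}$. Substituting this into (\ref{2-2}) yields the bound $\frac{b-a}{4}\left( \left\vert f^{\prime }(b)\right\vert ^{q}+\left\vert f^{\prime }(a)\right\vert ^{q}\right) ^{\frac{1}{q}}$, as asserted.

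There is no genuine obstacle in this argument, since the whole of it is a substitution into an inequality already established in Theorem \ref{2.2}. The only points requiring care are selecting the correct case among the three alternatives and avoiding a sign slip in the evaluation of $\gamma _{1}$, which is negative for this choice of parameters even though the resulting $\gamma _{2}$ and the final constant are positive.
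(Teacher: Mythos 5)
Your proposal is correct and follows exactly the route the paper intends: the corollary is a direct specialization of Theorem \ref{2.2}, obtained by substituting $\alpha=\frac{1}{2}$, $\lambda=0$ into $I_{f}$, verifying that the first case $\alpha\lambda\leq 1-\alpha\leq 1-\lambda(1-\alpha)$ applies, and computing $\gamma_{2}\left(\frac{1}{2},0\right)+\gamma_{2}\left(\frac{1}{2},0\right)=\frac{1}{8}+\frac{1}{8}=\frac{1}{4}$. Your evaluation of $\gamma_{1}\left(\frac{1}{2},0\right)=-\frac{1}{8}$ and hence $\gamma_{2}\left(\frac{1}{2},0\right)=\frac{1}{8}$ is accurate, and the resulting constant $\frac{b-a}{4}$ matches the stated inequality.
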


\begin{corollary}
\label{2.2c}In Theorem \ref{2.2} , if we take $\alpha =\frac{1}{2}$ , and $%
\lambda =1,$ then we get the following trapezoid inequality which is
identical to the inequality in \cite[Theorem 2.3]{BB11}.%
\begin{equation*}
\left\vert \frac{f\left( a\right) +f\left( b\right) }{2}-\frac{1}{b-a}%
\dint\limits_{a}^{b}f(x)dx\right\vert \leq \frac{b-a}{4}\left( \left\vert
f^{\prime }(b)\right\vert ^{q}+\left\vert f^{\prime }(a)\right\vert
^{q}\right) ^{\frac{1}{q}}
\end{equation*}
\end{corollary}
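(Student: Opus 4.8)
The plan is to obtain this trapezoid estimate as a pure specialization of Theorem~\ref{2.2} at the parameter values $\alpha=\frac{1}{2}$ and $\lambda=1$; no new integration is needed, since everything follows by evaluating the already-established bound \eqref{2-2} at these values. First I would substitute $\alpha=\frac{1}{2}$, $\lambda=1$ into the definition of $I_{f}(\lambda,\alpha,a,b)$. The term $(1-\lambda)f(\alpha a+(1-\alpha)b)$ drops out because $1-\lambda=0$, and the surviving term $\lambda(\alpha f(a)+(1-\alpha)f(b))$ collapses to $\frac{f(a)+f(b)}{2}$. Hence the left-hand side of \eqref{2-2} becomes exactly $\left\vert \frac{f(a)+f(b)}{2}-\frac{1}{b-a}\int_{a}^{b}f(x)\,dx\right\vert$, which is the claimed left-hand side.

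Next I would identify which branch of the piecewise factor in \eqref{2-2} applies. Computing the three comparison quantities at $\alpha=\frac{1}{2}$, $\lambda=1$ gives $\alpha\lambda=\frac{1}{2}$, $1-\alpha=\frac{1}{2}$, and $1-\lambda(1-\alpha)=\frac{1}{2}$. Thus all three coincide, and each of the three branch conditions in \eqref{2-2} holds simultaneously, with equalities throughout. The one point that deserves a moment of care is precisely this degeneracy: the three branches meet at a common boundary, so I should check that the three candidate sums of $\gamma$-values agree and that the resulting constant is therefore unambiguous.

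Finally I would evaluate the $\gamma$-functions from \eqref{2-2a}. Direct computation gives $\gamma_{1}(\tfrac{1}{2},1)=\frac{1}{2}\left[\frac{1}{2}-\frac{1}{4}\right]=\frac{1}{8}$ and $\gamma_{2}(\tfrac{1}{2},1)=\left(\frac{1}{2}\right)^{2}-\frac{1}{8}=\frac{1}{8}$. Since here $1-\alpha=\alpha=\frac{1}{2}$, the same two values reappear for the $(1-\alpha)$ arguments, i.e.\ $\gamma_{1}(1-\alpha,\lambda)=\gamma_{2}(1-\alpha,\lambda)=\frac{1}{8}$. Consequently every one of the three candidate sums equals $\frac{1}{8}+\frac{1}{8}=\frac{1}{4}$, confirming that the degenerate boundary case is consistent and that the piecewise factor equals $\frac{1}{4}$. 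Substituting this into \eqref{2-2} yields the bound $\frac{b-a}{4}\left(\vert f'(b)\vert^{q}+\vert f'(a)\vert^{q}\right)^{1/q}$, which is the stated inequality. The asserted coincidence with \cite[Theorem 2.3]{BB11} then follows by inspection, since the final formula is literally the trapezoid remainder estimate recorded there.
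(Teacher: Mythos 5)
Your proposal is correct and matches the paper's (implicit) argument exactly: the corollary is obtained by pure specialization of Theorem \ref{2.2} at $\alpha=\frac{1}{2}$, $\lambda=1$, and your computations ($\gamma_{1}(\tfrac12,1)=\gamma_{2}(\tfrac12,1)=\tfrac18$, all three branches coinciding with common value $\tfrac14$) check out. Your explicit verification that the degenerate boundary case is unambiguous is a small extra care the paper omits, but it does not change the route.
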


Using Lemma \ref{2.1} we shall give another result for convex functions as
follows.

\begin{theorem}
\label{2.3}Let $f:I\subseteq \mathbb{R\rightarrow R}$ be a differentiable
mapping on $I^{\circ }$ such that $f^{\prime }\in L[a,b]$, where $a,b\in
I^{\circ }$ with $a<b$ and $\alpha ,\lambda \in \left[ 0,1\right] $. If $%
\left\vert f^{\prime }\right\vert ^{q}$ is $P$-function on $[a,b]$, $q>1,$
then the following inequality holds:%
\begin{equation}
\left\vert I_{f}\left( \lambda ,\alpha ,a,b\right) \right\vert \leq \left(
b-a\right) \left( \frac{1}{p+1}\right) ^{\frac{1}{p}}  \label{2-12}
\end{equation}%
\begin{equation*}
\times \left\{ 
\begin{array}{cc}
\left[ \varepsilon _{1}^{1/p}(\alpha ,\lambda ,p)c_{f}^{1/q}(\alpha
,q)+\varepsilon _{1}^{1/p}(1-\alpha ,\lambda ,p)k_{f}^{1/q}(\alpha ,q)\right]
, & \alpha \lambda \leq 1-\alpha \leq 1-\lambda \left( 1-\alpha \right) \\ 
\left[ \varepsilon _{1}^{1/p}(\alpha ,\lambda ,p)c_{f}^{1/q}(\alpha
,q)+\varepsilon _{2}^{1/p}(1-\alpha ,\lambda ,p)k_{f}^{1/q}(\alpha ,q)\right]
, & \alpha \lambda \leq 1-\lambda \left( 1-\alpha \right) \leq 1-\alpha \\ 
\left[ \varepsilon _{2}^{1/p}(\alpha ,\lambda ,p)c_{f}^{1/q}(\alpha
,q)+\varepsilon _{1}^{1/p}(1-\alpha ,\lambda ,p)k_{f}^{1/q}(\alpha ,q)\right]
, & 1-\alpha \leq \alpha \lambda \leq 1-\lambda \left( 1-\alpha \right)%
\end{array}%
\right. ,
\end{equation*}%
where 
\begin{eqnarray}
c_{f}(\alpha ,q) &=&\left( 1-\alpha \right) \left[ \left\vert f^{\prime
}\left( \left( 1-\alpha \right) b+\alpha a\right) \right\vert
^{q}+\left\vert f^{\prime }\left( a\right) \right\vert ^{q}\right] ,
\label{2-12a} \\
k_{f}(\alpha ,q) &=&\alpha \left[ \left\vert f^{\prime }\left( \left(
1-\alpha \right) b+\alpha a\right) \right\vert ^{q}+\left\vert f^{\prime
}\left( b\right) \right\vert ^{q}\right] ,  \notag
\end{eqnarray}%
\begin{equation}
\varepsilon _{1}(\alpha ,\lambda ,p)=\left( \alpha \lambda \right)
^{p+1}+\left( 1-\alpha -\alpha \lambda \right) ^{p+1},\ \varepsilon
_{2}(\alpha ,\lambda ,p)=\left( \alpha \lambda \right) ^{p+1}-\left( \alpha
\lambda -1+\alpha \right) ^{p+1},  \notag
\end{equation}%
and $\frac{1}{p}+\frac{1}{q}=1.$
\end{theorem}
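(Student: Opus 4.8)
The plan is to follow the same opening as in Theorem \ref{2.2}, but to replace the power-mean inequality by H\"{o}lder's inequality, since now $q>1$ provides a genuine conjugate exponent $p$ with $\frac{1}{p}+\frac{1}{q}=1$. First I would invoke Lemma \ref{2.1} and pass to absolute values, obtaining
\[
\left\vert I_f(\lambda,\alpha,a,b)\right\vert \le (b-a)\left[\int_0^{1-\alpha}\left\vert t-\alpha\lambda\right\vert\left\vert f'(tb+(1-t)a)\right\vert dt+\int_{1-\alpha}^1\left\vert t-1+\lambda(1-\alpha)\right\vert\left\vert f'(tb+(1-t)a)\right\vert dt\right].
\]
Then I would apply H\"{o}lder's inequality to each of the two integrals separately, splitting off the weight $\left\vert t-\alpha\lambda\right\vert$ (resp. $\left\vert t-1+\lambda(1-\alpha)\right\vert$) with exponent $p$ and the factor $\left\vert f'\right\vert$ with exponent $q$. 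This produces the four factors $\big(\int_0^{1-\alpha}\left\vert t-\alpha\lambda\right\vert^p dt\big)^{1/p}$, $\big(\int_0^{1-\alpha}\left\vert f'(tb+(1-t)a)\right\vert^q dt\big)^{1/q}$, and the analogous pair on $[1-\alpha,1]$.

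The key step, and the one I expect to be the main obstacle, is to bound the two $q$-th power integrals of $\left\vert f'\right\vert$ by $c_f$ and $k_f$. The point is that one should \emph{not} use the crude global bound $\left\vert f'(tb+(1-t)a)\right\vert^q\le\left\vert f'(b)\right\vert^q+\left\vert f'(a)\right\vert^q$; instead one exploits that on each subinterval the argument stays between the \emph{finer} endpoints. Concretely, for $t\in[0,1-\alpha]$ I would write $tb+(1-t)a=s\,[(1-\alpha)b+\alpha a]+(1-s)\,a$ with $s=\frac{t}{1-\alpha}\in[0,1]$, so that the $P$-function property of $\left\vert f'\right\vert^q$ gives $\left\vert f'(tb+(1-t)a)\right\vert^q\le\left\vert f'((1-\alpha)b+\alpha a)\right\vert^q+\left\vert f'(a)\right\vert^q$; integrating this constant bound over $[0,1-\alpha]$ yields exactly $c_f(\alpha,q)$. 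Symmetrically, for $t\in[1-\alpha,1]$ I would write $tb+(1-t)a=s'\,b+(1-s')\,[(1-\alpha)b+\alpha a]$ with $s'=\frac{t-(1-\alpha)}{\alpha}\in[0,1]$, whence $\left\vert f'(tb+(1-t)a)\right\vert^q\le\left\vert f'(b)\right\vert^q+\left\vert f'((1-\alpha)b+\alpha a)\right\vert^q$ and integration over $[1-\alpha,1]$ gives $k_f(\alpha,q)$.

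It then remains to evaluate the weight integrals. For $\int_0^{1-\alpha}\left\vert t-\alpha\lambda\right\vert^p dt$ the integrand has a kink at $t=\alpha\lambda$: when $\alpha\lambda\le 1-\alpha$ the kink lies inside $[0,1-\alpha]$ and splitting there gives $\frac{1}{p+1}\varepsilon_1(\alpha,\lambda,p)$, whereas when $\alpha\lambda\ge 1-\alpha$ the integrand has constant sign and one obtains $\frac{1}{p+1}\varepsilon_2(\alpha,\lambda,p)$. For the second integral the substitution $s=1-t$ (as already used for (\ref{2-5})) turns it into $\int_0^\alpha\left\vert s-(1-\alpha)\lambda\right\vert^p ds$, which is the first integral with $\alpha$ replaced by $1-\alpha$; thus it equals $\frac{1}{p+1}\varepsilon_1(1-\alpha,\lambda,p)$ when $(1-\alpha)\lambda\le\alpha$ and $\frac{1}{p+1}\varepsilon_2(1-\alpha,\lambda,p)$ otherwise.

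Finally I would reconcile these dichotomies with the three hypotheses. Since $1-\alpha\le 1-\lambda(1-\alpha)$ is equivalent to $(1-\alpha)\lambda\le\alpha$, the chain $\alpha\lambda\le 1-\alpha\le 1-\lambda(1-\alpha)$ forces the pair $(\varepsilon_1,\varepsilon_1)$, the chain $\alpha\lambda\le 1-\lambda(1-\alpha)\le 1-\alpha$ forces $(\varepsilon_1,\varepsilon_2)$, and $1-\alpha\le\alpha\lambda\le 1-\lambda(1-\alpha)$ forces $(\varepsilon_2,\varepsilon_1)$, matching the three displayed lines exactly. Collecting the common factor $\big(\frac{1}{p+1}\big)^{1/p}$ from the two $p$-power integrals and combining it with the $c_f^{1/q}$, $k_f^{1/q}$ bounds then produces (\ref{2-12}), which completes the argument. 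The only genuinely nonroutine ingredient is the convex-combination representation on each subinterval in the second paragraph; the rest is a careful but elementary bookkeeping of the three sign cases.
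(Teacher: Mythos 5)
Your proposal is correct and follows essentially the same route as the paper's proof: Lemma \ref{2.1}, H\"{o}lder's inequality with the weight $\left\vert t-\alpha \lambda \right\vert$ (resp. $\left\vert t-1+\lambda (1-\alpha )\right\vert$) raised to the $p$-th power, the bounds $c_{f}(\alpha ,q)$ and $k_{f}(\alpha ,q)$ for the two $q$-th power integrals of $\left\vert f^{\prime }\right\vert$, and the same three-case evaluation of the weight integrals. The only cosmetic difference is that the paper obtains (\ref{2-14}) and (\ref{2-15}) by applying the Hadamard-type inequality (\ref{1-2}) to $\left\vert f^{\prime }\right\vert ^{q}$ on the subintervals $[a,(1-\alpha )b+\alpha a]$ and $[(1-\alpha )b+\alpha a,b]$, whereas you derive the same pointwise bound directly from the $P$-function definition and integrate it; these amount to the same argument, since (\ref{1-2}) is itself proved exactly that way.
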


\begin{proof}
Since $\left\vert f^{\prime }\right\vert ^{q}$ is $P$-function on $[a,b]$,
from Lemma \ref{2.1} and by H\"{o}lder's integral inequality, we have%
\begin{eqnarray*}
&&\left\vert I_{f}\left( \lambda ,\alpha ,a,b\right) \right\vert \\
&\leq &\left( b-a\right) \left[ \dint\limits_{0}^{1-\alpha }\left\vert
t-\alpha \lambda \right\vert \left\vert f^{\prime }\left( tb+(1-t)a\right)
\right\vert dt+\dint\limits_{1-\alpha }^{1}\left\vert t-1+\lambda \left(
1-\alpha \right) \right\vert \left\vert f^{\prime }\left( tb+(1-t)a\right)
\right\vert dt\right]
\end{eqnarray*}%
\begin{eqnarray}
&\leq &\left( b-a\right) \left\{ \left( \dint\limits_{0}^{1-\alpha
}\left\vert t-\alpha \lambda \right\vert ^{p}dt\right) ^{\frac{1}{p}}\left(
\dint\limits_{0}^{1-\alpha }\left\vert f^{\prime }\left( tb+(1-t)a\right)
\right\vert ^{q}dt\right) ^{\frac{1}{q}}\right.  \label{2-13} \\
&&+\left. \left( \dint\limits_{1-\alpha }^{1}\left\vert t-1+\lambda \left(
1-\alpha \right) \right\vert ^{p}dt\right) ^{\frac{1}{p}}\left(
\dint\limits_{1-\alpha }^{1}\left\vert f^{\prime }\left( tb+(1-t)a\right)
\right\vert ^{q}dt\right) ^{\frac{1}{q}}\right\} .  \notag
\end{eqnarray}%
By the inequality (\ref{1-2}), we get 
\begin{eqnarray}
\dint\limits_{0}^{1-\alpha }\left\vert f^{\prime }\left( tb+(1-t)a\right)
\right\vert ^{q}dt &=&\left( 1-\alpha \right) \left[ \frac{1}{\left(
1-\alpha \right) \left( b-a\right) }\dint\limits_{a}^{\left( 1-\alpha
\right) b+\alpha a}\left\vert f^{\prime }\left( x\right) \right\vert ^{q}dx%
\right]  \notag \\
&\leq &\left( 1-\alpha \right) \left[ \left\vert f^{\prime }\left( \left(
1-\alpha \right) b+\alpha a\right) \right\vert ^{q}+\left\vert f^{\prime
}\left( a\right) \right\vert ^{q}\right] .  \label{2-14}
\end{eqnarray}%
The inequality (\ref{2-14}) also holds for $\alpha =1$. Similarly, for $%
\alpha \in \left( 0,1\right] $ by the inequality (\ref{1-2}), we have 
\begin{eqnarray}
\dint\limits_{1-\alpha }^{1}\left\vert f^{\prime }\left( tb+(1-t)a\right)
\right\vert ^{q}dt &=&\alpha \left[ \frac{1}{\alpha \left( b-a\right) }%
\dint\limits_{\left( 1-\alpha \right) b+\alpha a}^{b}\left\vert f^{\prime
}\left( x\right) \right\vert ^{q}dx\right]  \notag \\
&\leq &\alpha \left[ \left\vert f^{\prime }\left( \left( 1-\alpha \right)
b+\alpha a\right) \right\vert ^{q}+\left\vert f^{\prime }\left( b\right)
\right\vert ^{q}\right] .  \label{2-15}
\end{eqnarray}%
The inequality (\ref{2-15}) also holds for $\alpha =0$. By simple computation%
\begin{equation}
\dint\limits_{0}^{1-\alpha }\left\vert t-\alpha \lambda \right\vert
^{p}dt=\left\{ 
\begin{array}{cc}
\frac{\left( \alpha \lambda \right) ^{p+1}+\left( 1-\alpha -\alpha \lambda
\right) ^{p+1}}{p+1}, & \alpha \lambda \leq 1-\alpha \\ 
\frac{\left( \alpha \lambda \right) ^{p+1}-\left( \alpha \lambda -1+\alpha
\right) ^{p+1}}{p+1}, & \alpha \lambda \geq 1-\alpha%
\end{array}%
\right. ,  \label{2-16}
\end{equation}%
and%
\begin{equation}
\dint\limits_{1-\alpha }^{1}\left\vert t-1+\lambda \left( 1-\alpha \right)
\right\vert ^{p}dt=\left\{ 
\begin{array}{cc}
\frac{\left[ \lambda \left( 1-\alpha \right) \right] ^{p+1}+\left[ \alpha
-\lambda \left( 1-\alpha \right) \right] ^{p+1}}{p+1}, & 1-\alpha \leq
1-\lambda \left( 1-\alpha \right) \\ 
\frac{\left[ \lambda \left( 1-\alpha \right) \right] ^{p+1}-\left[ \lambda
\left( 1-\alpha \right) -\alpha \right] ^{p+1}}{p+1}, & 1-\alpha \geq
1-\lambda \left( 1-\alpha \right)%
\end{array}%
\right. ,  \label{2-17}
\end{equation}%
thus, using (\ref{2-14})-(\ref{2-17}) in (\ref{2-13}), we obtain the
inequality (\ref{2-12}). This completes the proof.
\end{proof}

\begin{corollary}
\label{2.3a}In Theorem \ref{2.3}, if we take $\alpha =\frac{1}{2}$ and $%
\lambda =\frac{1}{3}$, then we have the following Simpson type inequality 
\begin{equation*}
\left\vert \frac{1}{6}\left[ f(a)+4f\left( \frac{a+b}{2}\right) +f(b)\right]
-\frac{1}{b-a}\dint\limits_{a}^{b}f(x)dx\right\vert \leq \frac{b-a}{12}%
\left( \frac{1+2^{p+1}}{3\left( p+1\right) }\right) ^{\frac{1}{p}}
\end{equation*}%
\begin{equation*}
\times \left\{ \left( \left\vert f^{\prime }\left( \frac{a+b}{2}\right)
\right\vert ^{q}+\left\vert f^{\prime }\left( a\right) \right\vert
^{q}\right) ^{\frac{1}{q}}+\left( \left\vert f^{\prime }\left( \frac{a+b}{2}%
\right) \right\vert ^{q}+\left\vert f^{\prime }\left( b\right) \right\vert
^{q}\right) ^{\frac{1}{q}}\right\} .
\end{equation*}
\end{corollary}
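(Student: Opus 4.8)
The plan is to specialize Theorem~\ref{2.3} to $\alpha=\frac12$ and $\lambda=\frac13$ and then simplify the resulting constant. First I would record the elementary consequences of this choice: $\alpha\lambda=\frac16$, $1-\alpha=\frac12$, and $1-\lambda(1-\alpha)=\frac56$, so that $\alpha\lambda\le 1-\alpha\le 1-\lambda(1-\alpha)$ and we land in the first of the three cases listed in \eqref{2-12}. I would also note that $\alpha a+(1-\alpha)b=(1-\alpha)b+\alpha a=\frac{a+b}{2}$, which is what converts the $P$-function bound into one expressed through $f'\!\left(\frac{a+b}{2}\right)$.

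Next I would identify the left-hand side. With $\alpha=\frac12,\lambda=\frac13$ the discrete part of $I_f$ becomes
\[
\tfrac13\!\left(\tfrac12 f(a)+\tfrac12 f(b)\right)+\tfrac23 f\!\left(\tfrac{a+b}{2}\right)=\tfrac16\!\left[f(a)+4f\!\left(\tfrac{a+b}{2}\right)+f(b)\right],
\]
so that $I_f(\tfrac13,\tfrac12,a,b)$ equals $\frac16[f(a)+4f(\frac{a+b}{2})+f(b)]-\frac1{b-a}\int_a^b f(x)\,dx$, the Simpson expression whose modulus is to be bounded.

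I would then evaluate the constants of Theorem~\ref{2.3} at these parameters. For $\varepsilon_1$ I compute $\varepsilon_1(\tfrac12,\tfrac13,p)=(\tfrac16)^{p+1}+(\tfrac13)^{p+1}=\frac{1+2^{p+1}}{6^{p+1}}$, and since $1-\alpha=\frac12=\alpha$ here, the same value is obtained for $\varepsilon_1(1-\alpha,\lambda,p)$. For the weight factors, $c_f(\tfrac12,q)=\tfrac12\!\left(\left|f'\!\left(\tfrac{a+b}{2}\right)\right|^q+|f'(a)|^q\right)$ and $k_f(\tfrac12,q)=\tfrac12\!\left(\left|f'\!\left(\tfrac{a+b}{2}\right)\right|^q+|f'(b)|^q\right)$, so that $c_f^{1/q}$ and $k_f^{1/q}$ each carry a factor $(\tfrac12)^{1/q}$ in front of the two sums that appear in braces in the stated bound.

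The main (though entirely routine) obstacle is the collapse of the numerical constant. Collecting the outer factor $(b-a)\left(\tfrac1{p+1}\right)^{1/p}$ with $\varepsilon_1^{1/p}=\left(\frac{1+2^{p+1}}{6^{p+1}}\right)^{1/p}$ and the factor $(\tfrac12)^{1/q}$ coming from $c_f^{1/q},k_f^{1/q}$, and using the conjugacy $\frac1q=1-\frac1p$, I would verify
\[
\left(\tfrac1{p+1}\right)^{1/p}\left(\tfrac{1+2^{p+1}}{6^{p+1}}\right)^{1/p}\left(\tfrac12\right)^{1/q}=\tfrac1{12}\left(\tfrac{1+2^{p+1}}{3(p+1)}\right)^{1/p},
\]
the crucial step being $6^{-(p+1)/p}(\tfrac12)^{1-1/p}=\tfrac16\,6^{-1/p}\cdot\tfrac12\cdot 2^{1/p}=\tfrac1{12}\,3^{-1/p}$. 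Factoring this common constant out of the two bracketed terms $\left(|f'(\tfrac{a+b}{2})|^q+|f'(a)|^q\right)^{1/q}$ and $\left(|f'(\tfrac{a+b}{2})|^q+|f'(b)|^q\right)^{1/q}$ then reproduces exactly the right-hand side of the asserted inequality, which completes the proof.
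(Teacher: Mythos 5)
Your proposal is correct and takes exactly the approach the paper intends: the corollary is given there as a direct specialization of Theorem \ref{2.3} with no further argument, and you have supplied the verification accurately --- the parameter values $\alpha\lambda=\tfrac16\le 1-\alpha=\tfrac12\le 1-\lambda(1-\alpha)=\tfrac56$ do select the first case of \eqref{2-12}, $I_f\bigl(\tfrac13,\tfrac12,a,b\bigr)$ does reduce to the Simpson expression, and the constants $\varepsilon_1=\tfrac{1+2^{p+1}}{6^{p+1}}$, $c_f$, $k_f$ combine, via $\tfrac1q=1-\tfrac1p$, to the stated factor $\tfrac{b-a}{12}\bigl(\tfrac{1+2^{p+1}}{3(p+1)}\bigr)^{1/p}$. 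Nothing is missing.
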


\begin{corollary}
\label{2.3b}In Theorem \ref{2.3}, if we take $\alpha =\frac{1}{2}$ and $%
\lambda =0,$ then we have the following midpoint inequality%
\begin{eqnarray*}
&&\left\vert f\left( \frac{a+b}{2}\right) -\frac{1}{b-a}\dint%
\limits_{a}^{b}f(x)dx\right\vert \leq \frac{b-a}{4}\left( \frac{1}{p+1}%
\right) ^{\frac{1}{p}} \\
&&\times \left\{ \left( \left\vert f^{\prime }\left( \frac{a+b}{2}\right)
\right\vert ^{q}+\left\vert f^{\prime }\left( a\right) \right\vert
^{q}\right) ^{\frac{1}{q}}+\left( \left\vert f^{\prime }\left( \frac{a+b}{2}%
\right) \right\vert ^{q}+\left\vert f^{\prime }\left( b\right) \right\vert
^{q}\right) ^{\frac{1}{q}}\right\} .
\end{eqnarray*}%
We note that by inequality 
\begin{equation*}
\left\vert f^{\prime }\left( \frac{a+b}{2}\right) \right\vert ^{q}\leq
\left\vert f^{\prime }\left( a\right) \right\vert ^{q}+\left\vert f^{\prime
}\left( b\right) \right\vert ^{q}
\end{equation*}%
we have%
\begin{eqnarray*}
&&\left\vert f\left( \frac{a+b}{2}\right) -\frac{1}{b-a}\dint%
\limits_{a}^{b}f(x)dx\right\vert \\
&\leq &\frac{b-a}{4}\left( \frac{1}{p+1}\right) ^{\frac{1}{p}}\left\{ \left(
\left\vert f^{\prime }\left( b\right) \right\vert ^{q}+2\left\vert f^{\prime
}\left( a\right) \right\vert ^{q}\right) ^{\frac{1}{q}}+\left( \left\vert
f^{\prime }\left( a\right) \right\vert ^{q}+2\left\vert f^{\prime }\left(
b\right) \right\vert ^{q}\right) ^{\frac{1}{q}}\right\} .
\end{eqnarray*}
\end{corollary}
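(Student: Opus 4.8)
The plan is to obtain this corollary as a direct specialization of Theorem \ref{2.3}, followed by a single application of the $P$-function hypothesis. First I would substitute $\alpha=\tfrac12$ and $\lambda=0$ into the statement of Theorem \ref{2.3} and identify which of the three cases governs the resulting bound. With these values one has $\alpha\lambda=0$, $1-\alpha=\tfrac12$, and $1-\lambda(1-\alpha)=1$, so that $\alpha\lambda\le 1-\alpha\le 1-\lambda(1-\alpha)$; hence the first case applies, and the relevant quantities are $\varepsilon_1(\tfrac12,0,p)$ (which occurs twice, since $\alpha=1-\alpha=\tfrac12$) together with $c_f(\tfrac12,q)$ and $k_f(\tfrac12,q)$.

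The second step is the routine evaluation of these constants. From the definitions in Theorem \ref{2.3} one finds $\varepsilon_1(\tfrac12,0,p)=(\tfrac12)^{p+1}$, whence $\varepsilon_1^{1/p}(\tfrac12,0,p)=\tfrac12(\tfrac12)^{1/p}$, while $c_f(\tfrac12,q)=\tfrac12\bigl[|f'(\tfrac{a+b}{2})|^q+|f'(a)|^q\bigr]$ and $k_f(\tfrac12,q)=\tfrac12\bigl[|f'(\tfrac{a+b}{2})|^q+|f'(b)|^q\bigr]$. The one point that requires care is the bookkeeping of the powers of $\tfrac12$: combining the factor $(\tfrac12)^{1/p}$ carried by $\varepsilon_1^{1/p}$ with the factor $(\tfrac12)^{1/q}$ emerging from $c_f^{1/q}$ (and likewise from $k_f^{1/q}$) and invoking the conjugacy relation $\tfrac1p+\tfrac1q=1$ collapses $(\tfrac12)^{1/p+1/q}$ to $\tfrac12$. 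Together with the explicit $\tfrac12$ already present in $\varepsilon_1^{1/p}$, this produces the overall constant $\tfrac{b-a}{4}\bigl(\tfrac{1}{p+1}\bigr)^{1/p}$ and yields the first displayed midpoint inequality.

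For the final step I would sharpen that inequality using only the defining property of a $P$-function. Since $|f'|^q\in P([a,b])$ and $\tfrac{a+b}{2}=\tfrac12 a+\tfrac12 b$, the definition gives $|f'(\tfrac{a+b}{2})|^q\le |f'(a)|^q+|f'(b)|^q$. Substituting this bound into the two terms $|f'(\tfrac{a+b}{2})|^q+|f'(a)|^q$ and $|f'(\tfrac{a+b}{2})|^q+|f'(b)|^q$ replaces them by $2|f'(a)|^q+|f'(b)|^q$ and $|f'(a)|^q+2|f'(b)|^q$ respectively, which delivers the second displayed inequality and completes the argument. There is no structural obstacle here; the argument is entirely a specialization plus one monotone substitution, and the only thing to watch is the exponent arithmetic in the middle step, since a miscount of the powers of $\tfrac12$ would spoil the clean constant $\tfrac{b-a}{4}$.
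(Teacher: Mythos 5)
Your proposal is correct and follows exactly the route the paper intends: Corollary \ref{2.3b} is a direct specialization of Theorem \ref{2.3} at $\alpha=\tfrac12$, $\lambda=0$ (landing in the first case since $\alpha\lambda=0\leq\tfrac12\leq1$), with the exponent arithmetic $(\tfrac12)^{1/p}(\tfrac12)^{1/q}=\tfrac12$ producing the constant $\tfrac{b-a}{4}\bigl(\tfrac{1}{p+1}\bigr)^{1/p}$, followed by the $P$-function bound $\left\vert f^{\prime}\left(\tfrac{a+b}{2}\right)\right\vert^{q}\leq\left\vert f^{\prime}(a)\right\vert^{q}+\left\vert f^{\prime}(b)\right\vert^{q}$ for the second display. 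No gaps.
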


\begin{corollary}
\label{2.3c}In Theorem \ref{2.3}, if we take $\alpha =\frac{1}{2}$ and $%
\lambda =1,$ then we have the following trapezoid inequality 
\begin{eqnarray*}
&&\left\vert \frac{f\left( a\right) +f\left( b\right) }{2}-\frac{1}{b-a}%
\dint\limits_{a}^{b}f(x)dx\right\vert \leq \frac{b-a}{4}\left( \frac{1}{p+1}%
\right) ^{\frac{1}{p}} \\
&&\times \left\{ \left( \left\vert f^{\prime }\left( \frac{a+b}{2}\right)
\right\vert ^{q}+\left\vert f^{\prime }\left( a\right) \right\vert
^{q}\right) ^{\frac{1}{q}}+\left( \left\vert f^{\prime }\left( \frac{a+b}{2}%
\right) \right\vert ^{q}+\left\vert f^{\prime }\left( b\right) \right\vert
^{q}\right) ^{\frac{1}{q}}\right\} .
\end{eqnarray*}
\end{corollary}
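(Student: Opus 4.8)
The plan is to read this off from Theorem~\ref{2.3} by setting $\alpha=\tfrac12$ and $\lambda=1$; no fresh estimation is needed, only substitution and simplification. First I would confirm the left-hand side: in $I_f(\lambda,\alpha,a,b)$ the term $\lambda\bigl(\alpha f(a)+(1-\alpha)f(b)\bigr)$ becomes $\tfrac{f(a)+f(b)}{2}$, while $(1-\lambda)f(\alpha a+(1-\alpha)b)$ vanishes because $1-\lambda=0$; hence $I_f(1,\tfrac12,a,b)=\tfrac{f(a)+f(b)}{2}-\tfrac{1}{b-a}\int_a^b f(x)\,dx$, the trapezoidal quantity on the left of the claim.

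Next I would decide which of the three branches of \eqref{2-12} applies. At $\alpha=\tfrac12$, $\lambda=1$ one computes $\alpha\lambda=1-\alpha=1-\lambda(1-\alpha)=\tfrac12$, so the hypotheses of all three cases hold simultaneously with equality and the branches must coincide. I would make this explicit through the vanishing boundary terms $1-\alpha-\alpha\lambda=0$ and $\alpha\lambda-1+\alpha=0$, which give $\varepsilon_1(\tfrac12,1,p)=\varepsilon_2(\tfrac12,1,p)=(\tfrac12)^{p+1}$ and likewise $\varepsilon_1(1-\alpha,1,p)=(\tfrac12)^{p+1}$; thus the choice of branch is immaterial.

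Then I would evaluate the remaining ingredients. Since $(1-\alpha)b+\alpha a=\tfrac{a+b}{2}$ when $\alpha=\tfrac12$, the definitions \eqref{2-12a} specialize to
\[
c_f(\tfrac12,q)=\tfrac12\Bigl[\,\bigl|f'(\tfrac{a+b}{2})\bigr|^q+|f'(a)|^q\,\Bigr],\qquad
k_f(\tfrac12,q)=\tfrac12\Bigl[\,\bigl|f'(\tfrac{a+b}{2})\bigr|^q+|f'(b)|^q\,\Bigr].
\]

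The one step that does genuine work is the exponent arithmetic. The first summand inside the brace of \eqref{2-12} is $\varepsilon_1^{1/p}(\tfrac12,1,p)\,c_f^{1/q}(\tfrac12,q)=(\tfrac12)^{(p+1)/p}(\tfrac12)^{1/q}\bigl(|f'(\tfrac{a+b}{2})|^q+|f'(a)|^q\bigr)^{1/q}$, and its scalar prefactor collapses precisely because $\tfrac1p+\tfrac1q=1$:
\[
\Bigl(\tfrac12\Bigr)^{(p+1)/p}\Bigl(\tfrac12\Bigr)^{1/q}=\Bigl(\tfrac12\Bigr)^{1+\frac1p+\frac1q}=\Bigl(\tfrac12\Bigr)^{2}=\tfrac14 .
\]
The $k_f$ summand simplifies identically to $\tfrac14\bigl(|f'(\tfrac{a+b}{2})|^q+|f'(b)|^q\bigr)^{1/q}$, and multiplying the resulting brace by the common factor $(b-a)\bigl(\tfrac{1}{p+1}\bigr)^{1/p}$ yields exactly the stated bound with prefactor $\tfrac{b-a}{4}\bigl(\tfrac{1}{p+1}\bigr)^{1/p}$. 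I expect no real obstacle here; the only point requiring care is recognizing that the three cases degenerate to a single branch at these parameters, after which the conjugate-exponent simplification just displayed does the rest.
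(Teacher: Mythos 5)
Your proposal is correct and coincides with the paper's (implicit) argument: the corollary is obtained purely by substituting $\alpha=\tfrac12$, $\lambda=1$ into Theorem \ref{2.3}, and your verification that $\varepsilon_1(\tfrac12,1,p)=\varepsilon_2(\tfrac12,1,p)=\varepsilon_1(\tfrac12,1,p)=(\tfrac12)^{p+1}$ (so the three branches of \eqref{2-12} coincide) together with the exponent identity $(\tfrac12)^{(p+1)/p}(\tfrac12)^{1/q}=\tfrac14$ via $\tfrac1p+\tfrac1q=1$ is exactly the computation needed. Nothing is missing.
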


\section{Some applications for special means}

We now recall the following well-known concepts. For arbitrary real numbers $%
a,b$, $a\neq b,$ we define

\begin{enumerate}
\item The unweighted arithmetic mean%
\begin{equation*}
A\left( a,b\right) :=\frac{a+b}{2},~a,b\in 
\mathbb{R}
.
\end{equation*}

\item Then $n-$Logarithmic mean%
\begin{equation*}
L_{n}\left( a,b\right) :=\ \left( \frac{b^{n+1}-a^{n+1}}{(n+1)(b-a)}\right)
^{\frac{1}{n}}\ ,\ n\in 
\mathbb{N}
,\;n\geq 1,\;a,b\in 
\mathbb{R}
,\ a<b.
\end{equation*}
\end{enumerate}

Now we give some applications of the new results derived in section 2 to
special means of real numbers.

\begin{proposition}
Let $a,b\in 
\mathbb{R}
$ with $a<b\ $ and \ $n\in 
\mathbb{N}
,n\geq 2.$ Then 
\begin{equation*}
\left\vert \frac{1}{3}A(a^{n},b^{n})+\frac{2}{3}A^{n}(a,b)-L_{n}^{n}(a,b)%
\right\vert \leq \frac{5n\left( b-a\right) }{36}\left( \left\vert
b\right\vert ^{(n-1)q}+\left\vert a\right\vert ^{(n-1)q}\right) ^{\frac{1}{q}%
}
\end{equation*}
\end{proposition}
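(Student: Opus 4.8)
The plan is to apply the Simpson-type inequality of Corollary \ref{2.2a} to the function $f(x)=x^{n}$ on $[a,b]$. First I would verify the hypotheses: $f$ is differentiable with $f'(x)=nx^{n-1}$, so $\left\vert f'(x)\right\vert ^{q}=n^{q}\left\vert x\right\vert ^{(n-1)q}$. Since $n\geq 2$ and $q\geq 1$ we have $(n-1)q\geq 1$, and the map $x\mapsto \left\vert x\right\vert ^{s}$ is convex and nonnegative on $\mathbb{R}$ for every $s\geq 1$. As the definition of $P$-function records that every nonnegative convex function belongs to $P(I)$ (indeed $f(\alpha x+(1-\alpha)y)\leq \alpha f(x)+(1-\alpha)f(y)\leq f(x)+f(y)$ whenever $f\geq 0$), it follows that $\left\vert f'\right\vert ^{q}$ is a $P$-function on $[a,b]$, so Corollary \ref{2.2a} applies for any $q\geq 1$.

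Next I would identify each term. For the left-hand side, substituting $f(x)=x^{n}$ gives $\frac{1}{6}\bigl[a^{n}+4\bigl(\frac{a+b}{2}\bigr)^{n}+b^{n}\bigr]=\frac{1}{3}\cdot \frac{a^{n}+b^{n}}{2}+\frac{2}{3}\bigl(\frac{a+b}{2}\bigr)^{n}$, which is exactly $\frac{1}{3}A(a^{n},b^{n})+\frac{2}{3}A^{n}(a,b)$. For the integral term, $\frac{1}{b-a}\int_{a}^{b}x^{n}\,dx=\frac{b^{n+1}-a^{n+1}}{(n+1)(b-a)}=L_{n}^{n}(a,b)$ by the definition of the logarithmic mean. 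Hence the absolute value appearing in Corollary \ref{2.2a} becomes precisely $\bigl\vert \frac{1}{3}A(a^{n},b^{n})+\frac{2}{3}A^{n}(a,b)-L_{n}^{n}(a,b)\bigr\vert$.

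Finally I would rewrite the right-hand side. Since $\left\vert f'(b)\right\vert ^{q}+\left\vert f'(a)\right\vert ^{q}=n^{q}\bigl(\left\vert b\right\vert ^{(n-1)q}+\left\vert a\right\vert ^{(n-1)q}\bigr)$, pulling the factor $n^{q}$ out of the $q$-th root turns $\frac{5(b-a)}{36}\bigl(\left\vert f'(b)\right\vert ^{q}+\left\vert f'(a)\right\vert ^{q}\bigr)^{1/q}$ into $\frac{5n(b-a)}{36}\bigl(\left\vert b\right\vert ^{(n-1)q}+\left\vert a\right\vert ^{(n-1)q}\bigr)^{1/q}$, which is the claimed bound. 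The only genuinely non-routine point is the verification that $\left\vert f'\right\vert ^{q}$ is a $P$-function; once the convexity of $\left\vert x\right\vert ^{(n-1)q}$ (valid because $(n-1)q\geq 1$, even when $[a,b]$ straddles the origin) is in hand, the rest is direct substitution into Corollary \ref{2.2a} and simplification.
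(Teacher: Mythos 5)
Your proposal is correct and follows exactly the paper's route: the paper likewise proves this proposition by applying Corollary \ref{2.2a} to $f(x)=x^{n}$, noting that $\left\vert f^{\prime }\right\vert ^{q}$ is a $P$-function. Your write-up simply fills in the details (convexity and nonnegativity of $\left\vert x\right\vert ^{(n-1)q}$, the identification of the Simpson expression with $\frac{1}{3}A(a^{n},b^{n})+\frac{2}{3}A^{n}(a,b)$ and of the integral mean with $L_{n}^{n}(a,b)$, and the extraction of the factor $n$) that the paper leaves implicit.
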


\begin{proof}
The assertion follows from Corollary \ref{2.2a} applied to the function $%
f(x)=x^{n}$,$\ x\in 
\mathbb{R}
$, because $\left\vert f^{\prime }\right\vert ^{q}$ is a $P$-function.
\end{proof}

\begin{proposition}
Let $a,b\in 
\mathbb{R}
$ with $a<b\ $ and \ $n\in 
\mathbb{N}
,n\geq 2.$ Then%
\begin{equation*}
\left\vert A^{n}(a,b)-L_{n}^{n}(a,b)\right\vert \leq \frac{n\left(
b-a\right) }{4}\left( \left\vert b\right\vert ^{(n-1)q}+\left\vert
a\right\vert ^{(n-1)q}\right) ^{\frac{1}{q}}
\end{equation*}%
and%
\begin{equation*}
\left\vert A(a^{n},b^{n})-L_{n}^{n}(a,b)\right\vert \leq \frac{n\left(
b-a\right) }{4}\left( \left\vert b\right\vert ^{(n-1)q}+\left\vert
a\right\vert ^{(n-1)q}\right) ^{\frac{1}{q}}
\end{equation*}
\end{proposition}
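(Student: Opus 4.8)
The plan is to obtain both bounds as immediate applications of the midpoint and trapezoid corollaries to the power function $f(x)=x^{n}$. First I would record the relevant evaluations: for $f(x)=x^{n}$ one has $f\left(\frac{a+b}{2}\right)=A^{n}(a,b)$, $\frac{f(a)+f(b)}{2}=A(a^{n},b^{n})$, and
\begin{equation*}
\frac{1}{b-a}\int_{a}^{b}x^{n}\,dx=\frac{b^{n+1}-a^{n+1}}{(n+1)(b-a)}=L_{n}^{n}(a,b).
\end{equation*}
Moreover $f^{\prime }(x)=nx^{n-1}$, so $\left\vert f^{\prime }(x)\right\vert ^{q}=n^{q}\left\vert x\right\vert ^{(n-1)q}$, whence $\left\vert f^{\prime }(b)\right\vert ^{q}+\left\vert f^{\prime }(a)\right\vert ^{q}=n^{q}\left( \left\vert b\right\vert ^{(n-1)q}+\left\vert a\right\vert ^{(n-1)q}\right)$; extracting $n$ from the $\tfrac{1}{q}$-power is exactly what turns the constant $\tfrac{b-a}{4}$ of the corollaries into the common factor $\tfrac{n(b-a)}{4}$ in both right-hand sides.

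Before invoking the corollaries I would verify the single nontrivial hypothesis, namely that $\left\vert f^{\prime }\right\vert ^{q}$ is a $P$-function on $[a,b]$. Since $q\geq 1$ and $n\geq 2$, the exponent satisfies $(n-1)q\geq 1$, so $\left\vert x\right\vert ^{(n-1)q}$ is a nonnegative convex function; as noted right after the definition, every nonnegative convex $g$ lies in the class $P$, because $g(\alpha x+(1-\alpha )y)\leq \alpha g(x)+(1-\alpha )g(y)\leq g(x)+g(y)$ when $g\geq 0$ and $\alpha \in [0,1]$. Multiplying by the nonnegative constant $n^{q}$ preserves membership, so $\left\vert f^{\prime }\right\vert ^{q}\in P([a,b])$ and the corollaries apply with this $q$.

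With these data in hand, the first inequality follows by substituting into Corollary \ref{2.2b} (the midpoint case $\alpha =\tfrac{1}{2}$, $\lambda =0$), and the second by substituting into Corollary \ref{2.2c} (the trapezoid case $\alpha =\tfrac{1}{2}$, $\lambda =1$); both carry the identical constant $\tfrac{b-a}{4}$, yielding the two displayed estimates. I do not expect any genuine obstacle: the argument is purely a specialization, and the only point meriting a moment's care is that $\left\vert x\right\vert ^{(n-1)q}$ remains convex across a sign change of $x$ (the case $a<0<b$), which is precisely why I appeal to convexity of $\left\vert x\right\vert ^{r}$ for $r\geq 1$ rather than to monotonicity.
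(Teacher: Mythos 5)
Your proposal is correct and follows exactly the paper's own route: the paper likewise disposes of both inequalities by applying Corollary \ref{2.2b} and Corollary \ref{2.2c} to $f(x)=x^{n}$, asserting that $\left\vert f^{\prime }\right\vert ^{q}$ is a $P$-function. The only difference is that you supply the verification of the $P$-function hypothesis (nonnegativity plus convexity of $\left\vert x\right\vert ^{(n-1)q}$ for $(n-1)q\geq 1$, which covers the case $a<0<b$), a detail the paper leaves implicit.
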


\begin{proof}
The assertion follows from Corollary \ref{2.2b} and Corollary \ref{2.2c}
applied to the function $f(x)=x^{n}$,$\ x\in 
\mathbb{R}
$, because $\left\vert f^{\prime }\right\vert ^{q}$ is a $P$-function.
\end{proof}

\begin{proposition}
Let $a,b\in 
\mathbb{R}
$ with $a<b\ $ and \ $n\in 
\mathbb{N}
,n\geq 2.$ Then 
\begin{equation*}
\left\vert \frac{1}{3}A(a^{n},b^{n})+\frac{2}{3}A^{n}(a,b)-L_{n}^{n}(a,b)%
\right\vert \leq \frac{n\left( b-a\right) }{12}\left( \frac{1+2^{p+1}}{%
3\left( p+1\right) }\right) ^{\frac{1}{p}}
\end{equation*}%
\begin{equation*}
\times \left\{ \left( \left\vert A(a,b)\right\vert ^{(n-1)q}+\left\vert
a\right\vert ^{(n-1)q}\right) ^{\frac{1}{q}}+\left( \left\vert
A(a,b)\right\vert ^{(n-1)q}+\left\vert b\right\vert ^{(n-1)q}\right) ^{\frac{%
1}{q}}\right\} .
\end{equation*}
\end{proposition}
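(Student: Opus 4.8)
The plan is to apply Corollary \ref{2.3a} to the function $f(x)=x^{n}$ on $[a,b]$, in the same manner as the two preceding propositions but now invoking the H\"older-based estimate (the case $q>1$) rather than the power-mean one.

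First I would verify that the hypotheses of Corollary \ref{2.3a} hold for this $f$. Differentiating gives $f'(x)=nx^{n-1}$, hence $\left\vert f'(x)\right\vert^{q}=n^{q}\left\vert x\right\vert^{(n-1)q}$. Since $n\geq 2$ and $q>1$ force $(n-1)q\geq 1$, the map $x\mapsto\left\vert x\right\vert^{(n-1)q}$ is nonnegative and convex; and any nonnegative convex $g$ is a $P$-function, because $g(\alpha x+(1-\alpha)y)\leq\alpha g(x)+(1-\alpha)g(y)\leq g(x)+g(y)$ for $\alpha\in[0,1]$. Thus $\left\vert f'\right\vert^{q}\in P([a,b])$ and the corollary applies.

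Next I would identify both sides of the corollary with the quantities appearing in the claim. On the left, substituting $f(x)=x^{n}$ into $\tfrac{1}{6}\left[f(a)+4f(\tfrac{a+b}{2})+f(b)\right]$ and regrouping the endpoint terms yields $\tfrac{1}{3}A(a^{n},b^{n})+\tfrac{2}{3}A^{n}(a,b)$, while $\tfrac{1}{b-a}\int_{a}^{b}x^{n}\,dx=\tfrac{b^{n+1}-a^{n+1}}{(n+1)(b-a)}=L_{n}^{n}(a,b)$ by the definition of $L_{n}$. On the right, I would use $\left\vert f'(\tfrac{a+b}{2})\right\vert^{q}=n^{q}\left\vert A(a,b)\right\vert^{(n-1)q}$, $\left\vert f'(a)\right\vert^{q}=n^{q}\left\vert a\right\vert^{(n-1)q}$, and $\left\vert f'(b)\right\vert^{q}=n^{q}\left\vert b\right\vert^{(n-1)q}$, factoring $n^{q}$ out of each inner sum so that taking the $q$-th root contributes a factor $n$ to each of the two bracketed terms. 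Collecting this $n$ with the prefactor $\tfrac{b-a}{12}\left(\tfrac{1+2^{p+1}}{3(p+1)}\right)^{1/p}$ gives exactly the stated inequality.

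I do not anticipate a genuine obstacle, as this is a direct specialization of an already-proved corollary. The only step meriting a word of care is the membership $\left\vert f'\right\vert^{q}\in P([a,b])$ when $0\in[a,b]$, where monotonicity of $x^{n-1}$ fails; this is supplied by the elementary convexity argument above rather than by any monotonicity.
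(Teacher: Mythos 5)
Your proposal is correct and follows exactly the paper's route: the paper's proof is the one-line observation that the assertion follows from Corollary \ref{2.3a} applied to $f(x)=x^{n}$ because $\left\vert f^{\prime }\right\vert ^{q}$ is a $P$-function. You merely supply the details the paper leaves implicit (the convexity argument showing $n^{q}\left\vert x\right\vert ^{(n-1)q}$ is a $P$-function even when $0\in \left[ a,b\right] $, and the algebraic identification of both sides), which is a welcome but not substantively different elaboration.
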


\begin{proof}
The assertion follows from Corollary \ref{2.3a} applied to the function $%
f(x)=x^{n}$,$\ x\in 
\mathbb{R}
$, because $\left\vert f^{\prime }\right\vert ^{q}$ is a $P$-function.
\end{proof}

\begin{proposition}
Let $a,b\in 
\mathbb{R}
$ with $a<b\ $ and \ $n\in 
\mathbb{N}
,n\geq 2.$ Then%
\begin{eqnarray*}
&&\left\vert A^{n}(a,b)-L_{n}^{n}(a,b)\right\vert \leq \frac{n\left(
b-a\right) }{4}\left( \frac{1}{p+1}\right) ^{\frac{1}{p}} \\
&&\times \left\{ \left( \left\vert A(a,b)\right\vert ^{(n-1)q}+\left\vert
a\right\vert ^{(n-1)q}\right) ^{\frac{1}{q}}+\left( \left\vert
A(a,b)\right\vert ^{(n-1)q}+\left\vert b\right\vert ^{(n-1)q}\right) ^{\frac{%
1}{q}}\right\} .
\end{eqnarray*}%
and%
\begin{eqnarray*}
&&\left\vert A(a^{n},b^{n})-L_{n}^{n}(a,b)\right\vert \leq \frac{b-a}{4}%
\left( \frac{1}{p+1}\right) ^{\frac{1}{p}} \\
&&\times \left\{ \left( \left\vert A(a,b)\right\vert ^{(n-1)q}+\left\vert
a\right\vert ^{(n-1)q}\right) ^{\frac{1}{q}}+\left( \left\vert
A(a,b)\right\vert ^{(n-1)q}+\left\vert b\right\vert ^{(n-1)q}\right) ^{\frac{%
1}{q}}\right\} .
\end{eqnarray*}
\end{proposition}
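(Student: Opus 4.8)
The plan is to specialize Corollary \ref{2.3b} (the midpoint inequality) and Corollary \ref{2.3c} (the trapezoid inequality) to the function $f(x)=x^{n}$ on $[a,b]$, in exact parallel with the way the preceding propositions applied Corollaries \ref{2.2b} and \ref{2.2c}. First I would record the three elementary translations that turn the abstract quantities of those corollaries into special means: for $f(x)=x^{n}$ one has $f\left(\frac{a+b}{2}\right)=A^{n}(a,b)$, $\frac{f(a)+f(b)}{2}=A(a^{n},b^{n})$, and
\begin{equation*}
\frac{1}{b-a}\dint\limits_{a}^{b}x^{n}\,dx=\frac{b^{n+1}-a^{n+1}}{(n+1)(b-a)}=L_{n}^{n}(a,b),
\end{equation*}
so that the left-hand sides of the two corollaries become precisely $\left\vert A^{n}(a,b)-L_{n}^{n}(a,b)\right\vert$ and $\left\vert A(a^{n},b^{n})-L_{n}^{n}(a,b)\right\vert$, respectively.

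Before invoking the corollaries I must verify their standing hypothesis, namely that $\left\vert f^{\prime}\right\vert^{q}$ is a $P$-function. Since $f^{\prime}(x)=nx^{n-1}$, we have $\left\vert f^{\prime}(x)\right\vert^{q}=n^{q}\left\vert x\right\vert^{(n-1)q}$. Because $n\geq 2$ and $q>1$ give $(n-1)q\geq 1$, the map $x\mapsto\left\vert x\right\vert^{(n-1)q}$ is nonnegative and convex; and every nonnegative convex function belongs to $P(I)$, since for $u,v\geq 0$ and $\alpha\in[0,1]$ one has $\alpha u+(1-\alpha)v\leq u+v$. Multiplying by the positive constant $n^{q}$ preserves membership in $P([a,b])$, so the hypothesis is met.

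It then remains to substitute the derivative values into the right-hand sides. Evaluating $\left\vert f^{\prime}\right\vert^{q}$ at the midpoint, at $a$, and at $b$ yields $n^{q}\left\vert A(a,b)\right\vert^{(n-1)q}$, $n^{q}\left\vert a\right\vert^{(n-1)q}$, and $n^{q}\left\vert b\right\vert^{(n-1)q}$. In each power-mean bracket the common factor $n^{q}$ factors out of the sum and, under the exponent $1/q$, emerges as a single factor $n$ in front of the braces. This turns the constant $\frac{b-a}{4}\left(\frac{1}{p+1}\right)^{1/p}$ of each corollary into $\frac{n(b-a)}{4}\left(\frac{1}{p+1}\right)^{1/p}$ multiplying the stated bracketed expression, giving the first asserted inequality and, by the identical computation with the trapezoid corollary, the second.

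I expect no real obstacle, as this is a direct specialization; the only point requiring care is the bookkeeping of the factor $n$ produced by $\left\vert f^{\prime}\right\vert^{q}=n^{q}\left\vert x\right\vert^{(n-1)q}$. I note that the derivation forces the coefficient $\frac{n(b-a)}{4}$ in \emph{both} inequalities, so the constant $\frac{b-a}{4}$ printed in the second inequality of the statement appears to be a typographical omission of this factor $n$.
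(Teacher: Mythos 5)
Your proof is correct and is essentially the paper's own argument: the paper's proof consists of a one-line citation of Corollary \ref{2.3b} and Corollary \ref{2.3c} applied to $f(x)=x^{n}$, with the $P$-function verification ($\left\vert f^{\prime }(x)\right\vert ^{q}=n^{q}\left\vert x\right\vert ^{(n-1)q}$, a nonnegative convex function times a positive constant) and the factor-$n$ bookkeeping left implicit, exactly the details you supplied. Your closing observation is also right: the derivation forces the constant $\frac{n\left( b-a\right) }{4}$ in the second inequality as well, and the printed $\frac{b-a}{4}$ is a typographical error --- indeed the printed version is false in general, e.g.\ for $a=0$, $b=1$ and $n$ large the left-hand side tends to $\frac{1}{2}$ while the printed right-hand side stays below $\frac{1}{4}$.
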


\begin{proof}
The assertion follows from Corollary \ref{2.3b} and Corollary \ref{2.3c}
applied to the function $f(x)=x^{n}$,$\ x\in 
\mathbb{R}
$, because $\left\vert f^{\prime }\right\vert ^{q}$ is a $P$-function.
\end{proof}

\end{document}